\newtheorem{theorem}{Theorem}[section]
\theoremstyle{definition}
\newtheorem{definition}[theorem]{Definition}
\newtheorem{example}[theorem]{Example}
\theoremstyle{remark}
\newtheorem{remark}[theorem]{Remark}
\numberwithin{equation}{section}
\begin{document}
\setcounter{page}{1}

%-------------------------- Pleased do not change the following line-------------------------------------------
%\noindent \textcolor[rgb]{0.99,0.00,0.00}{This is a submission to one of journals of TMRG: BJMA or AFA}\\[.5in]
%--------------------------------------------------------------------------------------------------------------
\title[On the complex valued metric-like spaces] {On the complex valued metric-like spaces}

\author[A. Hosseini and M. Mohammadzadeh Karizaki]{A. Hosseini and M. Mohammadzadeh Karizaki}
\address{ Amin Hosseini, Kashmar Higher Education Institute, Kashmar, Iran}
\email{\textcolor[rgb]{0.00,0.00,0.84}{a.hosseini@mshdiau.ac.ir}}

\address{M. Mohammadzadeh  Karizaki, Department of Computer Engineering, University of Torbat Heydarieh,
Torbat Heydarieh, Iran}
\email{\textcolor[rgb]{0.00,0.00,0.84}{m.mohammadzadeh@torbath.ac.ir}}

\subjclass[2010]{54A05, 46A19}

\keywords{Complex valued metric-like space, metric-like space, complex valued partial metric space, partial metric space, metric space}

\date{Received: xxxxxx; Revised: yyyyyy; Accepted: zzzzzz.
\newline \indent $^{*}$ Corresponding author}

\begin{abstract}
The main purpose of this paper is to study complex valued metric-like spaces as an extension of metric-like spaces, complex valued partial metric spaces, partial metric spaces, complex valued metric spaces and metric spaces. In this article, the concepts such as quasi-equal points, completely separate points, convergence of a sequence, Cauchy sequence, cluster points and complex diameter of a set are defined in a complex valued metric-like space. 
\end{abstract} \maketitle

\section{Introduction and preliminaries}
Distance is an important and fundamental notion in mathematics and there exist many generalizations of this concept in the literature (see \cite{De}). One of such generalizations is the partial metric which was introduced by Matthews (see \cite{Mat}). It differs from a metric in that points are allowed to have non-zero "self-distances" (i.e., $d(x,x) \geq 0$), and the triangle inequality is modified to account for positive self-distances. O'Neill \cite{O} extended Matthews definition to partial metrics with "negative distances". Before describing the material of this paper, let us recall some definitions and set the notations which we use in what follows.

\begin{definition} A mapping $\mathfrak{p} : X \times X \rightarrow \mathbb{R}^{+}$, where $X$ is a non-empty set, is said to be a
partial metric on $X$ if for any $x, y, z \in  X$, the following four conditions hold true:\\
(i) $x = y$ if and only if $\mathfrak{p}(x, x) = \mathfrak{p}(y, y) = \mathfrak{p}(x, y)$;\\
(ii) $\mathfrak{p}(x, x) \leq \mathfrak{p}(x, y)$;\\
(iii) $\mathfrak{p}(x, y) = \mathfrak{p}(y, x)$;\\
(iv) $\mathfrak{p}(x, z) \leq \mathfrak{p}(x, y) + \mathfrak{p}(y, z) - \mathfrak{p}(y, y)$.
\end{definition}
The pair $(X, \mathfrak{p})$ is then called a \emph{partial metric space.} A sequence $\{x_n\}$ in a partial metric space $(X, \mathfrak{p})$ converges to a point $x_0 \in X$ if $\lim_{n \rightarrow \infty}\mathfrak{p}(x_n, x_0) = \mathfrak{p}(x_0, x_0)$. A sequence $\{x_n\}$ of elements of $X$ is called \emph{Cauchy} if the limit $\lim_{m, n \rightarrow \infty} \mathfrak{p}(x_n, x_m)$ exists and is finite. The partial metric space $(X, \mathfrak{p})$ is called complete if for each Cauchy sequence $\{x_n\}$, there is some $x \in X$ such that
$$ \lim_{n \rightarrow \infty}\mathfrak{p}(x_n, x) = \mathfrak{p}(x, x) = \lim_{m, n \rightarrow \infty}\mathfrak{p}(x_n, x_m).$$
An example of a partial metric space is the pair $(\mathbb{R}^{+}, \mathfrak{p})$, where $\mathfrak{p}(x, y) = \max\{x, y\}$ for all $x, y \in \mathbb{R}^{+}$. For more material about the partial metric spaces, see, e.g. \cite{A, B, K} and references therein.

In 2012, A. Amini-Harandi \cite{AH} introduced a new extension of the concept of partial metric space, called a \emph{metric-like space}. After that, the concept of $b$-metric-like space which generalizes the notions of partial metric space, metric-like space, and $b$-metric space was introduced by Alghamdi et al. in \cite{Al}. Recently, Zidan and Mostefaoui \cite{Z} introduced the double controlled quasi metric-like spaces and studied some topological properties of this space. Here, we state the concept of a metric-like space.
\begin{definition} A mapping $\mathfrak{D}:X \times X \rightarrow \mathbb{R}^{+}$, where $X$ is a non-empty set, is said to be a metric-like on $X$ if for any $x, y, z \in X$, the following three conditions hold true:\\
(i) $\mathfrak{D}(x, y) = 0 \Rightarrow x = y$;\\
(ii) $\mathfrak{D}(x, y) = \mathfrak{D}(y, x)$;\\
(iii) $\mathfrak{D}(x, y) \leq \mathfrak{D}(x, z) + \mathfrak{D}(z, y)$.
\end{definition}
The pair $(X, \mathfrak{D})$ is called a metric-like space. A metric-like on $X$ satisfies all of the conditions of a metric except that $\mathfrak{D}(x, x)$ may be positive for some $x \in X$. The study of partial metric spaces has wide area of application, especially in computer sciences, see, e.g. \cite{B, Ma, R} and references therein. That is why working on this topic can be very useful in practice. Since metric-likes are generalizations of partial metrics, knowing them can therefore provide us more applicable fields. In fact, this is our motivation to study the metric-like spaces. Each metric-like $\mathfrak{D}$ on $X$ generates a topology $\tau_\mathfrak{D}$ on $X$ whose base is the family of open balls. An open ball in a metric-like space $(X, \mathfrak{D})$, with center $x$ and radius $r > 0$, is the set
$$B(x, r) = \{y \in X : |\mathfrak{D}(x, y) - \mathfrak{D}(x, x)| < r \}.$$
It is clear that a sequence $\{x_n\}$ in the metric-like space $(X, \mathfrak{D})$ converges to a point $x \in X$ if and only if $\lim_{n \rightarrow \infty}\mathfrak{D}(x_n, x) = \mathfrak{D}(x, x)$. A sequence $\{x_n\}$ of elements of a metric-like space $(X, \mathfrak{D})$ is called Cauchy if the limit $\lim_{n, m \rightarrow \infty}\mathfrak{D}(x_n, x_m)$ exists and is finite. The metric-like space $(X, \mathfrak{D})$ is called complete if for each Cauchy sequence $\{x_n\}$, there is some $x_0 \in X$ such that
$$\lim_{n \rightarrow \infty}\mathfrak{D}(x_n, x_0) = \mathfrak{D}(x_0, x_0) = \lim_{n, m \rightarrow \infty}\mathfrak{D}(x_n, x_m).$$
For more details on this topic, see, e.g.  \cite{AH, Ha}. Note that every partial metric space is a metric-like space. But, the converse is  not true in general. For example, let $X = \mathbb{R}$, and let $\mathfrak{D}(x, y) = \max\{|x - 5|, |y - 5|\}$ for all $x, y \in \mathbb{R}$. Then $(X, \mathfrak{D})$ is a metric-like space, but since $\mathfrak{D}(0, 0) \nleq \mathfrak{D}(1, 2)$, then $(X, \mathfrak{D})$ is not a partial metric space. We now state another extension of the notion of distance that allows distance to be a complex value. Azam et al., \cite{Az} introduced the concept of a complex valued metric space and obtained sufficient conditions for the existence of common fixed points of a pair of mappings satisfying contractive type conditions. In that article, they consider a partial order $\precsim$ on the set of complex numbers $\mathbb{C}$ and then introduce a complex valued metric. The partial order $\precsim$ is as follows:
\begin{align*}
z_1 \precsim z_2 \Leftrightarrow \ Re(z_1) \leq Re(z_2), \ Im(z_1) \leq Im(z_2).
\end{align*}
Let $X$ be a non-empty set. Suppose that the mapping $d :X \times X \rightarrow \mathbb{C}$ satisfies the following conditions:\\
(i) $0 \precsim d(x, y)$ for all $x, y \in X$;\\
(ii) $d(x,y) = 0 \Leftrightarrow x = y$;\\
(iii) $d(x,y) = d(y,x)$ for all $x, y \in X$;\\
(iv) $d(x,y) \precsim d(x,z) + d(z, y)$ for all $x, y, z \in X$.\\
Then $d$ is called a complex valued metric on $X$ , and $(X , d)$ is called a complex valued metric space. Combining the two concepts complex valued metric spaces and metric-like spaces, we get complex valued metric-like spaces. Also, we introduce the notion of a complex valued partial metric space. As will be seen, the notion of complex valued metric-like space is a generalization of the notions of metric-like space, complex valued metric space, partial metric space, complex valued partial metric space and metric space. Therefore, it is interesting to investigate this general notion.

In this article, we focus on the structure of complex valued metric-like spaces and study some topological properties
of this space. For instance, we introduce some concepts such as quasi-equal points, completely separate points, convergence of a sequence, Cauchy sequence, cluster point, limit point, complex absolute value and complex diameter of a subset of a complex valued metric-like space.
Additionally, we present several results about complex valued metric-like spaces.

\section{Results and proofs}
%First, we recall some definitions which have been stated in \cite{ho}.
Let $\mathbb{C}$ be the set of complex numbers and $z_1, z_2 \in \mathbb{C}$. Following \cite{Az}, we define a partial order $\precsim$ on $\mathbb{C}$ as follows:
\begin{align*}
z_1 \precsim z_2 \Leftrightarrow Re(z_1) \leq Re(z_2), \ Im(z_1) \leq Im(z_2).
\end{align*}
Hence, $z_1 \precsim z_2$ if one of the following conditions is satisfied:\\
(i) $Re(z_1) = Re(z_2)$ and  $Im(z_1) < Im(z_2)$;\\
(ii) $Re(z_1) < Re(z_2)$ and $\ Im(z_1) = Im(z_2)$;\\
(iii) $Re(z_1) < Re(z_2)$ and $\ Im(z_1) < Im(z_2)$;\\
(iv) $Re(z_1) = Re(z_2)$ and $\ Im(z_1) = Im(z_2)$.\\

We write $z_1 \precnsim z_2$ if $z_1 \neq z_2$ and one of (i), (ii), and (iii) is satisfied. Also, we write $z_1 \prec z_2$ if only (iii) is satisfied. Note that

\begin{align*}
& 0 \precsim z_1 \precnsim z_2 \Rightarrow |z_1| < |z_2|, \\ & z_1 \precsim z_2 \prec z_3 \Rightarrow z_1 \prec z_3.
\end{align*}

Also, $z_2 \succsim z_1$ (resp. $z_2 \succ z_1$) means that $z_1 \precsim z_2$ (resp. $z_1 \prec z_2$). Throughout the paper, the set $\{z \in \mathbb{C} \ | \ z \succsim 0 \}$ is denoted by $\mathbb{C} \succsim 0$, i.e. $\mathbb{C} \succsim 0 = \{z \in \mathbb{C} \ | \ z \succsim 0 \}$. A complex number $z$ is called positive if $0 \prec z$.

%\begin{itemize}
	%\item $x = y$ if and only if $p(x, x) = p(y, y) = p(x, y)$;
	%\item $p(x, x) \leq p(x, y)$;
    %\item $p(x, y) = p(y, x)$;
	%\item $p(x, z) \leq p(x, y) + p(y, z) - p(y, y)$.
%\end{itemize}

\begin{definition}
Let $X$ be a non-empty set. A mapping $d: X \times X \rightarrow  \mathbb{C}\succsim 0$ is called a
complex valued metric-like on $X$ if for any $x, y,  z\in X$, the following conditions hold:\\
(D1) $d(x, y)=0 \Rightarrow x=y$;\\
(D2) $d(x, y)=d(y, x)$;\\
(D3)$d(x, y)\precsim d(x, z)+d(z,y).$
%\end{itemize}
\end{definition}

The pair $(X, d)$ is then called a complex valued metric-like space. Indeed, a complex valued metric-like on $X$ satisfies all of the conditions of complex valued metric except that may be $ 0 \precnsim d(x, x)  $ for some $x \in X$. For convenience, we write (CVML) for "complex valued metric-like".

In the following, we provide an example of a CVML space. %and the interested reader is referred to \cite{ho} for more examples.

\begin{example}
Let $ X=\mathbb{C}$ . A mapping
$d: \mathbb{C}\times \mathbb{C} \rightarrow\mathbb{C}$ defined by
$d\left(z_{1}, z_{2}\right)=e^{i \theta}\left(| z_{1} |+\left|z_{2}\right|\right)$, where $0 \leqslant \theta \leqslant \frac{\pi}{2}$
is a CVML on $\mathbb{C}$.
 \end{example}

In the following, we introduce the complex absolute value of $z \in \mathbb{C}$ which is denoted by $|\cdot|_{c}$.
$$|z|_{c}=\left|\operatorname{Re(z)}\right|+i\left|Im(z)\right| $$ for any $z \in \mathbb{C}$.
Clearly, $0\precsim |z|_c$ for all $z \in \mathbb{C}$.

\begin{definition}
 Let $(X, d)$ be a CVML space and $A\subseteq X$. An open ball with center $x_0 \in X$ and radius $0 \prec r \in \mathbb{C}$ is the set
\begin{align*}
N(x_0, r)=\left\{y \in X :|d(x_0, y)-d(x_0, x_0)|_{c} \prec r \right\}.
\end{align*}
%\begin{enumerate}[(i)]
%(ii) $a\in A $ is called an interior point of $A$, whenever there is a complex number $0 \prec r $ such that $N(a, r) \subseteq A$. The set of all interior points of $A$ is denoted by $A^{\circ}$. Obviously, $A^{\circ} \subseteq A$.\\
%(iii) $A$ is called an open set, whenever each element of $A$ is an interior point of $A$, i.e. $A \subseteq A^{\circ}$. \\
%(iv) A point $x \in X$ is called a limit point of $A$ if $N(x, r) \cap(A \backslash\{x\})\neq\phi$ for any $0 \prec r  \in \mathbb{C}$.\\
%\end{enumerate}
%(v) $B$ is called a closed set whenever each limit point of $B$ belongs to $B$.
\end{definition}

\begin{definition} Let $X$ be a non-empty set. A mapping $p: X \times X\rightarrow\mathbb{C} \succsim 0$ is said to be a
complex valued partial metric on $X$ if for all $x, y, z \in X,$ the following conditions hold:
\begin{itemize}
  \item $x=y \Leftrightarrow p(x, x) = p(y, y) = p(x, y)$;
 \item $\max \{p(x, x), p(y, y)\} \precsim p(x, y)$;
  \item $ p(x, y)=p(y, x)$;
   \item $p(x, z)\precsim p(x, y)+p(y, z)-p(y, y)$
  \end{itemize}
    The pair $(X, p)$  is called a complex valued partial metric space.
\end{definition}

 \begin{definition}
Let $(X, d)$ be a CVML space, let $\{x_{n}\}_{n\geq 1}$ be a sequence of $X$ and let $x_{0} \in X$.
We say that
\begin{itemize}
\item The sequence $\{x_{n}\}_{n \geqslant 1}$ converges to $x_{0}$ if for every $0 \prec r \in \mathbb{C} $ there exists $ n_{0} \in \mathbb{N}$ such that $x_n \in N(x_{0}, r )$ for all $n>n_0$, i.e.
$|d(x_{n}, x_{0})-d(x_{0}, x_{0})|_{c} \prec r$ for all $n>n_0$.
We denote this by $\lim_{n \rightarrow +\infty} x_{n}=x_{0},$ or $x_{n} \rightarrow x_{0}$
 as $n \rightarrow +\infty$.
\item The sequence $\{x_{n}\}_{n \geqslant 1}$ is called Cauchy  if $\lim_{m,n \rightarrow +\infty} d(x_{n}, x_{m})$ exists and is finite.
It means that sequence $\{x_{n}\}_{n \geqslant 1}$ is Cauchy if and only if
$\lim_{m,n \rightarrow +\infty} d(x_{n}, x_{m})=z_0$ for some $z_0\in \mathbb{C}$.
 \item The complex valued metric-like space $(X, d)$ is complete if every Cauchy sequence of $X$ is convergent. On the other hand, the complex valued metric-like space $(X, d)$ is complete if for each Cauchy sequence $\{x_n\}$, there is some $x \in X$ such that \begin{align*} \lim_{n \rightarrow +\infty} d(x_{n}, x) = d(x, x) = \lim_{m, n \rightarrow +\infty}d(x_n, x_m)\end{align*}
\end{itemize}
 \end{definition}

%\begin{remark} A CVML space is a generalization of a metric-like space, a complex valued partial metric space, a partial metric space, a complex valued metric space and a matric space. Hence, it is interesting to investigate details of this general notion of distance.
%\end{remark}

%\begin{example}Let $d: X \times X \rightarrow \mathbb{C}$ be a non-Archimedean complex valued metric. It means that $d$ satisfies
%$$d(x, y) \leq \max\{d(x, z), d(z, y)\}, \ for \ all \ x, y, z \in X.$$
%Hence, $D(x, y ) = 1 + d(x, y)$ is a \emph{non-Archimedean metric-like} on $X$. spaces.
%\end{example}

\begin{remark} Note that in CVML spaces the limit of a convergent sequence is not necessarily unique and this means that topology of these spaces is not necessarily a Hausdorff topology. For instance, suppose that $X = \mathbb{C}$ and $d(x, y) =  i \max\{|x|, |y|\}$ for each $x, y \in X$. Putting $x_n = \frac{i}{n}$, we have $\lim_{n \rightarrow \infty}d(\frac{i}{n}, i) = \lim_{n \rightarrow \infty} i \max\{|\frac{i}{n}|, |i|\} = i = d(i, i)$. It means that the sequence $\{\frac{i}{n}\}$ converses to $i$, i.e. $\frac{i}{n} \rightarrow i$. Moreover, we have $\lim_{n \rightarrow \infty}d(\frac{i}{n}, 2i) = \lim_{n \rightarrow \infty} i \max\{|\frac{i}{n}|, |2i| \} = 2i = d(2i, 2i)$, and consequently, $\frac{i}{n} \rightarrow 2i$ as well. This demonstrates that the sequence $\{\frac{i}{n}\}$ converges to two different points.
\end{remark}

The above example leads us to the next definition.

\begin{definition} (Quasi-equal points) Let $(X, d)$ be a CVML space. The points $x, y \in X$ are called \emph{quasi-equal points} if there exists a sequence $\{x_n\}$ of $X$ converging to both $x$ and $y$, i.e. $x_n \rightarrow x$ and $x_n \rightarrow y$.
\end{definition}
From the previous remark, one can easily deduce that if $X = \mathbb{C}$ and $d(x, y ) = i \max\{|x|, |y|\}$ for each $x, y \in \mathbb{C}$, then $i$ and $2i$ are two quasi-equal points.

\begin{definition} (Completely separate points) Let $(X, d)$ be a CVML space. The points $x, y$ of $X$ are called \emph{completely separate points} if the following condition holds true:
$$d(x, x) + d(y,y) \prec d(x, y) $$
\end{definition}

\begin{theorem}
Let $(X, d)$ be a CVML space. Then there are no convergent sequences to two completely separate points.
\end{theorem}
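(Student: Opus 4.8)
The plan is to argue by contradiction. Suppose $x,y\in X$ are completely separate, i.e. $d(x,x)+d(y,y)\prec d(x,y)$, and yet some sequence $\{x_n\}$ of $X$ satisfies both $x_n\to x$ and $x_n\to y$. I would run $x_n$ through the triangle inequality (D3), $d(x,y)\precsim d(x,x_n)+d(x_n,y)$, and bound each summand by the corresponding self-distance plus an error term that convergence forces to be arbitrarily small.

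The one technical ingredient I would use is the elementary inequality $z\precsim w+|z-w|_c$, valid for all $z,w\in\mathbb{C}$ because $Re(z)\le Re(w)+|Re(z)-Re(w)|$ and $Im(z)\le Im(w)+|Im(z)-Im(w)|$. Applying it with $z=d(x_n,x)$, $w=d(x,x)$ gives $d(x_n,x)\precsim d(x,x)+|d(x_n,x)-d(x,x)|_c$. Then, fixing an arbitrary $0\prec r$ and using $x_n\to x$, for large $n$ one has $|d(x_n,x)-d(x,x)|_c\prec r$, whence $d(x,x)+|d(x_n,x)-d(x,x)|_c\prec d(x,x)+r$ and, via the rule $a\precsim b\prec c\Rightarrow a\prec c$, $d(x_n,x)\prec d(x,x)+r$. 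The same argument with $x_n\to y$ gives $d(x_n,y)\prec d(y,y)+r$ for large $n$. Feeding both bounds into (D3) together with the symmetry (D2) and the coordinatewise additivity of $\prec$, I would conclude $d(x,y)\precsim d(x_n,x)+d(x_n,y)\prec d(x,x)+d(y,y)+2r$, i.e. $d(x,y)\prec d(x,x)+d(y,y)+2r$ for every positive $r$.

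The final step is to choose $r$ small enough to contradict complete separateness. Setting $\alpha=Re(d(x,y))-Re(d(x,x))-Re(d(y,y))$ and $\beta=Im(d(x,y))-Im(d(x,x))-Im(d(y,y))$, the hypothesis gives $\alpha>0$ and $\beta>0$; with $r_0=\tfrac{\alpha}{4}+i\tfrac{\beta}{4}$ (so $0\prec r_0$), comparing real and imaginary parts shows $d(x,x)+d(y,y)+2r_0\prec d(x,y)$, which together with $d(x,y)\prec d(x,x)+d(y,y)+2r_0$ from the preceding paragraph forces $d(x,y)$ and $d(x,x)+d(y,y)+2r_0$ each to be strictly less than the other, an impossibility. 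This contradiction proves the theorem.

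I do not anticipate a genuine obstacle: the statement is essentially forced once (D3) and the definition of convergence are in place. The only thing requiring care is bookkeeping with strict versus non-strict inequalities in the partial order --- especially the passage from $d(x_n,x)\precsim d(x,x)+|d(x_n,x)-d(x,x)|_c$ to $d(x_n,x)\prec d(x,x)+r$ --- together with packaging the conclusion that $r$ may be taken arbitrarily small, which the explicit choice $r_0$ handles.
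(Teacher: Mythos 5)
Your proposal is correct and follows essentially the same route as the paper: contradiction via the triangle inequality (D3), with each term $d(x_n,x)$, $d(x_n,y)$ bounded by the corresponding self-distance plus an error controlled by $|\cdot|_c$ and convergence. The only cosmetic difference is that the paper fixes the tolerance $r_1=\tfrac{1}{3}\bigl(d(x,y)-d(x,x)-d(y,y)\bigr)$ at the outset and reaches $d(x,y)\prec d(x,x)+d(y,y)$ directly, whereas you prove $d(x,y)\prec d(x,x)+d(y,y)+2r$ for every $0\prec r$ and then specialize $r$.
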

\begin{proof} Suppose that $x, y$ are two completely separate points. To obtain a contradiction, let $\{x_n\}$ be a sequence of $X$ converging to both $x, y$. Put $r_1 = \frac{1}{3}r$, where $r = d(x, y) - d(x, x) - d(y, y) \succ 0$. Since $x_n \rightarrow x$ and $x_n \rightarrow y$, for $\varepsilon = r_1$, there exist two positive integers $N_1$ and $N_2$ such that $$\max\{|d(x_N, x) - d(x, x)|, |d(x_N, y) - d(y, y)|\} \prec r_1$$
for all $n \geq N = \max\{N_1, N_2\}$. Therefore,
\begin{align*}
d(x, y) = |d(x, y)|_{c} & \precsim |d(x, x_N) + d(x_N, y)|_{c} \\ & = |d(x, x_N) + d(x_N, y) - d(x, x) + d(x, x) - d(y, y) + d(y, y)|_{c} \\ & \precsim |d(x, x_N) - d(x, x)|_{c} + |d(x_N, y) - d(y, y)|_{c} + d(x, x) + d(y, y) \\ & \prec r_1 + r_1 + d(x, x) + d(y,y) \\ & = 2 r_1 + d(x,x) + d(y,y) \\ & = \frac{2}{3}\Big[d(x, y) - d(x, x) - d(y, y)\Big] + d(x, x) + d(y, y) \\ & = \frac{2}{3}d(x, y) + \frac{1}{3}d(x, x) + \frac{1}{3}d(y, y).
\end{align*}
So we obtain that $d(x, y) \prec d(x, x) + d(y, y)$, a contradiction. This contradiction proves our claim.
\end{proof}
Immediate conclusion from the above theorem demonstrates that completely separate points are not quasi-equal.

\begin{definition} (Cluster points) Let $(X, d)$ be a CVML space and let $A$ be a subset of $X$. A point $x_0 \in X$ is said to be a cluster point of $A$ if for every positive complex number $r$ there exists an element $a \in A$ such that $|d(a, x_0) - d(x_0, x_0)|_{c} \prec r$.
%$$ \forall \varepsilon > 0, \ \exists a \in \mathcal{A} : |D(a, x_0) - D(x_0, x_0)| < \varepsilon.$$
\end{definition}
As usual, the set of all \emph{cluster points} of $A$ is called the closure of $A$ and is denoted by $\overline{A}$. Note that $x_0 \in \overline{A}$ if and only if $N(x_0, r) \cap A \neq \phi$ for all $0 \prec r \in \mathbb{C}$. Indeed, we have
\begin{align*}
\overline{A} = \Big\{x_0 \in X : N(x_0, r) \cap A \neq \phi \ \ for \ all \ 0 \prec r \in \mathbb{C}\Big\}
\end{align*}

It is clear that if $(X, d)$ is a CVML space and $A$ is a subset of $X$, then $A \subseteq \overline{A}$. In the following, we establish a theorem to present a necessary and sufficient condition for cluster points in the CVML spaces. First, we prove a theorem about the convergence of sequences in CVML spaces.

\begin{theorem}%\cite[Theorem 3.1]{ho}
\label{1}
Let $(X, d)$ be a CVML space and let $\left\{x_{n}\right\}$ be a sequence of $X$. Then $\left\{x_{n}\right\}$ converges to $x$
if and only if $\lim_{n \rightarrow+\infty}\Big|d\left(x_{n}, x\right)-d(x, x)\Big| =0$.
 \end{theorem}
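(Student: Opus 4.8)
The plan is to unwind both sides to statements about real numbers and then invoke the elementary comparison between the two notions of absolute value in play. Write $w_n = d(x_n,x) - d(x,x)$, so that $|w_n|_c = |Re(w_n)| + i\,|Im(w_n)|$ while $|w_n|$ denotes the usual Euclidean modulus $\sqrt{Re(w_n)^2 + Im(w_n)^2}$. The key inequalities are the standard sandwich
\[
\max\{|Re(w_n)|,\,|Im(w_n)|\}\ \le\ |w_n|\ \le\ |Re(w_n)| + |Im(w_n)|.
\]
Moreover, by definition $x_n \to x$ means: for every $r \in \mathbb{C}$ with $Re(r) > 0$ and $Im(r) > 0$, there is $n_0$ with $|w_n|_c \prec r$ for all $n > n_0$, i.e. $|Re(w_n)| < Re(r)$ and $|Im(w_n)| < Im(r)$ for all $n > n_0$. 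Thus both statements are really about the two real sequences $|Re(w_n)|$ and $|Im(w_n)|$ tending to $0$, and the whole proof is the observation that each characterization is equivalent to that.

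For the forward implication, assume $x_n \to x$ and fix a real $\varepsilon > 0$. Apply the definition of convergence to the positive complex number $r = \tfrac{\varepsilon}{2} + i\tfrac{\varepsilon}{2}$: there is $n_0$ with $|Re(w_n)| < \tfrac{\varepsilon}{2}$ and $|Im(w_n)| < \tfrac{\varepsilon}{2}$ for all $n > n_0$. The right half of the sandwich then gives $|w_n| \le |Re(w_n)| + |Im(w_n)| < \varepsilon$ for all $n > n_0$, so $\lim_{n\to+\infty}|w_n| = 0$.

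For the converse, assume $\lim_{n\to+\infty}|w_n| = 0$ and let $r \in \mathbb{C}$ with $0 \prec r$ be given; then $\delta := \min\{Re(r), Im(r)\}$ is a genuine positive real number. Choose $n_0$ so that $|w_n| < \delta$ for all $n > n_0$. By the left half of the sandwich, $|Re(w_n)| \le |w_n| < \delta \le Re(r)$ and $|Im(w_n)| \le |w_n| < \delta \le Im(r)$, hence $|w_n|_c \prec r$ for all $n > n_0$, i.e. $x_n \to x$. The only point that needs care — and it is bookkeeping rather than a real obstacle — is keeping the two absolute values $|\cdot|$ and $|\cdot|_c$ straight and using that $0 \prec r$ forces \emph{both} coordinates of $r$ to be strictly positive, which is exactly what makes $\delta > 0$ and the strict inequalities in $\prec$ available.
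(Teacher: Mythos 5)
Your proof is correct and follows essentially the same route as the paper: both directions reduce the statement to controlling $|Re(w_n)|$ and $|Im(w_n)|$ separately and then compare the Euclidean modulus with the componentwise order via the standard inequalities $\max\{|Re(w)|,|Im(w)|\}\le |w|\le |Re(w)|+|Im(w)|$. The paper's choices of $\frac{\alpha}{\sqrt{2}}(1+i)$ and the bound $|w|_c \precsim (1+i)|w|$ play exactly the roles of your $\tfrac{\varepsilon}{2}+i\tfrac{\varepsilon}{2}$ and $\delta=\min\{Re(r),Im(r)\}$, so the two arguments differ only in bookkeeping.
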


\begin{proof}
Suppose that $\left\{x_{n}\right\}$ converges to $x$. Let $\varepsilon = \frac{\alpha}{\sqrt{2}}+i \frac{\alpha}{\sqrt{2}}=\frac{\alpha}{\sqrt{2}}(1+i)$, where
$\alpha$ is a given positive real number. Since  $\left\{x_{n}\right\}$
converges to $x$, there exists a positive integer $N$
such that
$\left|d\left(x_{n}, x\right)-d(x, x)\right|_{c} \prec \varepsilon $ for all $n>N$.
Therefore,
$$\Big| | d\left(x_{n}, x\right)- d(x, x) |_{c}\Big| < |\varepsilon| = \alpha$$
for all $n>N$,
which means that
$\left|d\left(x_{n}, x\right)-d(x, x)\right| < \alpha$ for all $n > N$.
So
$$ \lim_{n \rightarrow+\infty} \left|d\left(x_{n}, x\right)-d(x, x)\right|=0.$$

Conversely, suppose that $\lim_{n \rightarrow+\infty} |d\left(x_{n}, x\right)-d(x, x) | = 0$. Therefore, for each positive real number $\alpha$, there exists a positive integer $N$ such that $\left|d(x_{n}, x)-d(x, x)\right| < \alpha$ for all $n > N$.
%$$\forall \varepsilon >0 \quad \exists \ n \in N \quad, n>N \Rightarrow\left|d\left(x_{n}, x\right)-d(x, x)\right|<\varepsilon.$$
It is clear that for any $ 0 \prec \varepsilon \in \mathbb{C}$, there exists a positive real number $\alpha$ such that
$ \alpha+ i\alpha=(1+i) \alpha \prec \varepsilon.$
Since $\lim_{n \rightarrow+\infty}  \left|d\left(x_{n}, x\right)-d(x, x)\right|=0,$ there
exists a positive integer $n_{0} \in \mathbb{N}$ such  that
$\left|d\left(x_{n}, x\right)-d(x, x)\right|<\alpha$ for all $ n \geq n_{0}$.
Hence, for all $n \geq n_{0}$, we have

\begin{eqnarray*} \left| d\left(x_{n}, x\right)- d(x, x)\right|_{c}  \precsim\left(1+i \right) | d\left(x_{n}, x\right)-d\left(x , x\right)|   \prec (1+i) \alpha \prec \varepsilon \end{eqnarray*}
Indeed, we get that for all $0 \prec \varepsilon \in \mathbb{C}$ there exists a positive integer $n_0$ such that $|d \left(x_{n}, x\right)-\left.d(x, n)\right|_{c} \prec \varepsilon$ for all $n \geq n_0$, and
%$$\forall\quad 0 \prec \rho \in \mathbb{C} \quad \exists n_{0} \in \mathbb{N}, \quad \forall n>  n_{0} \Rightarrow|d \left(x_{n}, x\right)-\left.d(x, n)\right|_{c}<c$$
this means that
$
\lim_{n \rightarrow+\infty}x_{n} = x $, as desired.
 \end{proof}

\begin{theorem} \label{2*}Let $(X, d)$ be a CVML space and let $A$ be a subset of $X$. Then $x_0 \in \overline{\mathcal{A}}$ if and only if there is a sequence $\{a_n\} \subseteq \mathcal{A}$ converging to $x_0$.
\end{theorem}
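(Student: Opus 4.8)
The plan is to prove both implications directly, using the sequential characterization of convergence from Theorem \ref{1} for the forward direction and the open-ball definition of convergence for the converse.

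For the ``only if'' part, suppose $x_0 \in \overline{A}$. The idea is to feed the cluster-point condition a sequence of radii shrinking to $0$. For each $n \in \mathbb{N}$ put $r_n = \frac{1}{n} + \frac{i}{n}$; then $0 \prec r_n$, so by the description of $\overline{A}$ recorded after the definition of cluster points there exists a point $a_n \in N(x_0, r_n) \cap A$, that is, $a_n \in A$ and $|d(a_n, x_0) - d(x_0, x_0)|_{c} \prec r_n$. Since $0 \precsim |z|_{c}$ for every $z \in \mathbb{C}$, we get $0 \precsim |d(a_n, x_0) - d(x_0, x_0)|_{c} \prec r_n$, whence $\big| |d(a_n, x_0) - d(x_0, x_0)|_{c} \big| < |r_n|$; and because $\big| |z|_{c} \big| = |z|$ for every $z$, this reads $|d(a_n, x_0) - d(x_0, x_0)| < |r_n| = \frac{\sqrt{2}}{n}$. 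Hence $\lim_{n \to +\infty} |d(a_n, x_0) - d(x_0, x_0)| = 0$, and Theorem \ref{1} gives $a_n \to x_0$ with $\{a_n\} \subseteq A$, as required.

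For the ``if'' part, suppose $\{a_n\} \subseteq A$ and $a_n \to x_0$. Let $0 \prec r \in \mathbb{C}$ be arbitrary. By the definition of convergence in a CVML space there is $n_0 \in \mathbb{N}$ such that $a_n \in N(x_0, r)$ for all $n > n_0$; in particular $a_{n_0 + 1} \in N(x_0, r) \cap A$, so $N(x_0, r) \cap A \neq \phi$. Since $r$ was an arbitrary positive complex number, $x_0 \in \overline{A}$.

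I do not anticipate a real obstacle in this argument; the only points requiring a little care are the passage between the complex absolute value $|\cdot|_{c}$ and the ordinary modulus $|\cdot|$ needed to invoke Theorem \ref{1}, and the observation that the chosen radii $r_n = \frac{1}{n} + \frac{i}{n}$ are genuinely positive complex numbers tending to $0$, so that the cluster-point hypothesis can legitimately be applied for every $n$ to extract the sequence $\{a_n\}$.
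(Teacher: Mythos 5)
Your proposal is correct and follows essentially the same route as the paper: the forward direction extracts $a_n$ from the balls $N(x_0, \frac{1+i}{n})$ and invokes Theorem \ref{1} via $\lim_{n\to+\infty}|d(a_n,x_0)-d(x_0,x_0)|=0$, and the converse uses the definition of convergence to show every ball $N(x_0,r)$ meets $A$. Your version merely makes explicit the passage from $|\cdot|_c \prec r_n$ to the modulus bound $|r_n|=\frac{\sqrt{2}}{n}$, which the paper leaves implicit.
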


\begin{proof} Suppose that $x_0 \in \overline{A}$. So for each $r_n = \frac{1+i}{n}$ ($n \in \mathbb{N}$), there is an element $a_n \in \mathcal{A}$ such that $|d(a_n, x_0) - d(x_0, x_0)|_{c} \prec \frac{1+i}{n}$. It implies that $\lim_{n \rightarrow +\infty}|d(a_n, x_0) - d(x_0, x_0)|= 0$, and it follows from Theorem \ref{1} that the sequence $\{a_n\} \subseteq A$ converges to $x_0$. Conversely, assume that $\{a_n\}$ is a sequence of $A$ converging to $x_0$. We must to show that $x_0 \in \overline{A}$. Let $r$ be an arbitrary positive complex number. Therefore, there exists a positive number $N$ such that $a_n \in N(x_0, r)$ for all $n \geq N$. It means that $A \cap N(x_0, r) \neq \phi$. Since we are assuming that $r$ is an arbitrary positive complex number, it is deduced that $x_0 \in \overline{A}$, as desired.
\end{proof}

\begin{definition} (Limit points) Let $(X, d)$ be a CVML space and let $A$ be a subset of $X$. A point $x_0 \in X$ is said to be a limit point of $A$ if  $N(x_0,r) \cap (A - \{x_0\}) \neq \phi$ for all positive complex numbers $r$.
%$$ \forall \varepsilon > 0, \ \exists a \in \mathcal{A} : |D(a, x_0) - D(x_0, x_0)| < \varepsilon.$$
\end{definition}
As usual, the set of all \emph{limit points} of $A$ is denoted by $A'$. A subset $A$ of $X$ is called a closed set, whenever each limit point of $A$ belongs to itself, i.e. $A' \subseteq A.$ One can easily prove the following theorem.

\begin{theorem} Let $(X, d)$ be a CVML space and let $A$ be a subset of $X$. Then $\overline{A} = A \cup A'$.
\end{theorem}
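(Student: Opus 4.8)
The plan is to prove the two inclusions $\overline{A} \subseteq A \cup A'$ and $A \cup A' \subseteq \overline{A}$ separately, relying on the characterizations $\overline{A} = \{x_0 : N(x_0,r)\cap A \neq \phi \text{ for all } 0 \prec r\}$ and $A' = \{x_0 : N(x_0,r)\cap(A-\{x_0\}) \neq \phi \text{ for all } 0 \prec r\}$ that have just been introduced.

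For the inclusion $A \cup A' \subseteq \overline{A}$, I would handle the two pieces. If $x_0 \in A$, then since $A \subseteq \overline{A}$ (noted in the excerpt), we get $x_0 \in \overline{A}$ immediately. If $x_0 \in A'$, then for every positive complex $r$ we have $N(x_0,r)\cap(A-\{x_0\}) \neq \phi$, and since $A-\{x_0\} \subseteq A$ this forces $N(x_0,r)\cap A \neq \phi$; as $r$ was arbitrary, $x_0 \in \overline{A}$.

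For the reverse inclusion $\overline{A}\subseteq A\cup A'$, I would take $x_0 \in \overline{A}$ and argue by cases on whether $x_0 \in A$. If $x_0 \in A$ we are done. If $x_0 \notin A$, then for every positive complex $r$ the set $N(x_0,r)\cap A$ is nonempty; but since $x_0 \notin A$, any element of $N(x_0,r)\cap A$ automatically lies in $A - \{x_0\}$, so $N(x_0,r)\cap(A-\{x_0\}) \neq \phi$ for all $r$, whence $x_0 \in A'$. Either way $x_0 \in A \cup A'$.

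The argument is entirely set-theoretic and the one subtlety — the only place where something could "go wrong" — is the case $x_0 \notin A$ in the reverse inclusion: one must observe that removing a point not in $A$ does not change $A$, i.e. $A - \{x_0\} = A$ when $x_0 \notin A$, so the cluster-point condition and the limit-point condition coincide there. Since no properties of the complex valued metric-like structure beyond the definitions of $\overline{A}$ and $A'$ are used, I do not anticipate any real obstacle; the proof is a short bookkeeping argument, which is consistent with the excerpt's remark that "one can easily prove the following theorem."
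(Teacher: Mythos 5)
Your proof is correct and is exactly the straightforward set-theoretic argument the paper has in mind when it says the theorem is easy to prove (the paper itself omits the proof). The one subtlety you flag — that $A - \{x_0\} = A$ when $x_0 \notin A$, so the cluster-point and limit-point conditions coincide in that case — is handled properly, and your appeal to $A \subseteq \overline{A}$ is justified since the paper notes this explicitly.
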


\begin{definition} Let $(X, d)$ be a CVML space and let $A$ be a subset of $X$. The complex diameter of $A$ is defined as follows:
$$ diam_c(A) : = \sup \Big\{|d(x, y) - d(x, x)|_{c}, |d(x, y) - d(y, y)|_{c} : x, y \in A \Big\}.$$
If $|diam_c(A)| < \infty$, then the subset $A \subseteq X$ is said to be bounded.
\end{definition}

\begin{example} Let $X = \mathbb{C}$, $d(x, y) = (1+i)(|z_1| + |z_2|)$, and let $A = \{z \in \mathbb{C} : 3 < |z| < 5\}$. In this case, we have
\begin{align*}
diam_c(A) & = \sup\Big\{|d(z_1, z_2) - d(z_1, z_1)|_{c}, |d(z_1, z_2) - d(z_2,z_2)|_{c} : z_1, z_2 \in A \Big\} \\ & = \sup\Big\{\big||z_1| - |z_2|\big| + i \big||z_1| - |z_2|\big| : z_1, z_2 \in A \Big\} \\ & = \sup\Big\{\big||z_1| - |z_2|\big|(1+i) : z_1, z_2 \in A \Big\} \\ & = 2(1+i).
\end{align*}
\end{example}

\vspace{.25cm}
%{\bf Acknowledgements} The authors is greatly indebted to the referee for his/her valuable suggestions and careful reading of the paper.

%\vspace{.25cm} {\bf Conflict of interest statement.} \\ The author states that there is no conflict of interest.

%\vspace{.25cm} {\bf Data availability statement.} \\ The author agrees that the data of this article are available.

%\vspace{.25cm} {\bf Author Contribution and Funding Statement.} \\ The author received no funding for this article.

\bibliographystyle{amsplain}

\end{document}